\newcommand*{\tightdisplaymath}{\abovedisplayskip\z@\belowdisplayskip\z@}
\newcommand\RSloop{\@ifnextchar\bgroup\RSloopa\RSloopb}
\newcommand\RSloopa[1]{\bgroup\RSloop#1\relax\egroup\RSloop}
\newcommand\RSloopb[1]
\newcommand\X{0}
\newcommand\RS[1]
\newcommand\RSdef[1]{\expandafter\def\csname RS:#1\endcsname}
\newlength\RSu
\newcommand{\MDu}{\RS{{R{r}}{L{l}}}}
\newcommand{\MCS}{\RS{{R{r}}{L{l}r}}} 
\newcommand{\MCD}{\RS{{R{l}{r}}{L{l}}}} 
\newcommand\restr[2]{{
  \left.\kern-\nulldelimiterspace 
  #1 
  \littletaller 
  \right|_{#2} 
  }}
\newcommand{\vers}{A model structure on $\aCatSt$}
\title[\vers]{A model structure on the category of $\Ain$-categories with strict morphisms}
\author{Mattia Ornaghi}
\address{\parbox{0.9\textwidth}{Universit\`a degli Studi di Milano\\
Dipartimento di Matematica\\
Via Cesare Saldini 50, 20133 Milano, Italy}}
\email{mattia12.ornaghi@gmail.com}
\theoremstyle{definition}
\newtheorem{defn}{Definition}[section]
\newtheorem{thm}{Theorem}[section]
\newtheorem{lem}[thm]{Lemma}
\newtheorem{cor}[thm]{Corollary}
\newtheorem*{namedthmA}{Theorem A}
\theoremstyle{remark}
\newtheorem{rem}{Remark}[section]
\newtheorem{exmp}{Example}[section]
\newcommand{\Ain}{\mbox{A$_{\infty}$}}
\newcommand{\aCat}{\mbox{A$_{\infty}$Cat}}
\newcommand{\aCatSt}{\aCat_{\tiny\mbox{strict}}}
\newcommand{\Id}{\mbox{Id}}
\newcommand{\A}{\mathscr{A}}
\newcommand{\B}{\mathscr{B}}
\newcommand{\Ho}{\mbox{Ho}}
\newcommand{\F}{\mathscr{F}}
\newcommand{\C}{\mathscr{C}}
\subjclass[2020]{14F08, 18G70, 18N40}
\thanks{The author was supported by the research project FARE 2018 HighCaSt (grant number R18YA3ESPJ) and by ERC Advanced
Grant-101095900-TriCatApp}
\begin{document}

\date{\today}

\maketitle

\begin{abstract}
We prove that the category of (strictly unital) $\Ain$-categories, linear over a commutative ring $R$, with strict $\Ain$-morphisms has a cofibrantly generated model structure.\ 
In this model structure every object is fibrant and the cofibrant objects have cofibrant morphisms.\ 
As a consequence we prove that the semi-free $\Ain$-categories (resp. resolutions) 
are cofibrant objects (resp. resolution) in this model structure.
\end{abstract}

\section{Introduction-Statement of Results}
We fix a commutative ring $R$, an $\Ain$-category is a $R$-linear DG-category associative up to homotopy.\ 
In a few words, an $\Ain$-category $\A$ is a graded category equipped with multilinear maps 
\begin{align*}
m^n_{\A}:\A(x_{n-1},x_n)\otimes...\otimes\A(x_0,x_1)\to \A(x_0,x_n)[2-n],
\end{align*}
for every integer $n\ge 1$ and sequence of objects $x_0,,...,x_{n}\in\A$, satisfying axioms (see \cite[Definition 1.1.1 (1.1)]{Orn1}).\
Considering $m^1_{\A}$ as the differential, we can associate to $\A$ the (graded) category $H(\A)$ whose hom spaces are given by:
\begin{align}
H(\A)(x,y):=\bigoplus_{n\in\mathbb{Z}}H^n(\A (x,y)).
\end{align}

Taking two $\Ain$-categories $\A$ and $\B$, we call \emph{$\Ain$-functor} a family of multilinear maps $\mathcal{f}\F^n \mathcal{g}_{n\ge 0}$ of the form 
\begin{align}
\F^n:\A(x_{n-1},x_n)\otimes...\otimes\A(x_0,x_n)\to\B(\F^0(x_{n}),\F^0(x_0))[1-n]
\end{align}
for every integer $n\ge 1$ and sequence of objects $x_0,...,x_{n}\in\A$, satisfying axioms (see \cite[Definition 1.2.1 (1.2)]{Orn1}).\
There are several notions of unit in the framework of $\Ain$-categories, in this paper we consider the strictly unital $\Ain$-categories with strictly unital $\Ain$-functors (see \cite[Definitions 1.1.4]{Orn1}).\
From now on, we denote by $\aCat$ the category of $\Ain$-categories with the $\Ain$-functors.\
Note that $\aCat$ is not complete (since it does not admit equalizers \cite[Lemma 1.28]{COS1}).\\
We say that an $\Ain$-functor $\F$ is a \emph{quasi-equivalence} if it induces an equivalence of (graded) categories:
\begin{align}
[\F]:H(\A)\to H(\B)
\end{align}
and an equivalence $H^0(\F):H^0(\A)\to H^0(\B)$.\
We denote by $\Ho(\aCat)$ the homotopy category, i.e. the (Gabriel-Zisman) localization of $\aCat$ with respect to the class of quasi-equivalences.\\

$\Ain$-categories were introduced in the early 1990s in the context of Homological Mirror Symmetry since the Fukaya category of a symplectic manifold, comes naturally equipped with a structure of this kind.\
Note that the pioneers of this field, such as Kontsevich, Fukaya, Oh, Ono, Ohta, Seidel, Soibelman, etc, assumed by definition a \emph{flatness} hypothesis on the hom-spaces of an $\Ain$-category.\ 
For example, in \cite[Definition 1.1]{Fuk} and \cite[\S3.2.1]{FOOOH1} an $\Ain$-category $\A$ is such that the hom-spaces $\A(x,y)$ are graded free $R$-modules, or the base commutative ring $R$ is assumed to be a field, see \cite{KS} or \cite{Sei}.\

On the other hand, the definition of $\Ain$-category makes sense without any restriction on the hom-spaces.\ For this reason, more recently, many people such as Ganatra, Pardon, Shende, Oh, Tanaka started to used the term \emph{cofibrant $\Ain$-category} to indicate an $\Ain$-category whose hom-spaces are h-projective DG-modules (see \cite[Definition 2.6]{GPS}, \cite{Tan}, \cite[Definition 1.2]{OT}).\ Note that, despite the name, this has nothing to do with a model structure on $\aCat$, it is well known that $\aCat$ has no model structure \cite[\S1.5]{COS1}.\ 
The term \emph{cofibrant} is inherited by the DG-categories.\ Indeed a cofibrant DG-category (in the model structure of Example \ref{Tabboz}), has cofibrant hom-spaces.\ In particular it is a h-projective DG-category.\ 

The first goal of this paper is to give a precise definition of \emph{cofibrant $\Ain$-category}, namely we provided a model structure on the category of $\Ain$-categories (taking a subset of $\Ain$-functors).\ In this model structure, if an $\Ain$-category is cofibrant then it has cofibrant hom-spaces, in particular it is h-projective (see Theorem A).
\\

Despite the lack of a model structure, we can describe the hom-spaces of the homotopy category of $\aCat$ as follows: 
\begin{align}
\Ho(\aCat)(\A,\B)\simeq\Ho(\aCat)(\A^{\tiny\mbox{hps}},\B)/\approx. 
\end{align}
Here $\A^{\tiny\mbox{hps}}$ denotes a h-projective with splits unit $\Ain$-category which is quasi-equivalent to $\A$ and $\approx$ denotes the weakly equivalence relation \cite[Theorem A]{Orn2}.\\

In order to prove that every $\Ain$-category $\A$ has a resolution of the form $\A^{\tiny\mbox{hps}}$ was given the notion of semi-free $\Ain$-category, and it was proven that the semi-free resolution of $\A$ is a h-projective with splits unit $\Ain$-category (see \cite[Theorem 5.1]{Orn2}).\
Note that, in the case of DG-categories, the semi-free resolutions correspond to the cofibrant resolutions in Tabuada model structure (see Example \ref{Tabboz}).\ For this reason in \emph{loc.}\ \emph{cit.} it was left as an open question if the semi-free $\Ain$-categories are a kind of \emph{cofibrant resolutions} in an appropriate category.\ In this note we prove the following:

\begin{namedthmA}\label{lanzo}
There is a cofibrantly generated model structure on $\aCat_{\tiny\mbox{strict}}$ whose weak equivalences are the quasi-equivalences.\ In such a model structure the fibrations are the isofibrations (see Definition \ref{isofib}) strict $\Ain$-functors which are surjective on the morphisms.\ Every category is a fibrant object and every cofibrant object $\A$ is such that $\A(a_1,a_2)$ is a cofibrant object in $\mbox{Ch}(R)$.
\end{namedthmA}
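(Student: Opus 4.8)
The plan is to establish the model structure by appealing to a recognition/transfer theorem for cofibrantly generated model structures, transporting structure from the well-understood model category $\Ch(R)$ along a suitable adjunction. Concretely, I would first identify the generating cofibrations $I$ and generating trivial cofibrations $J$ in $\aCatSt$ by lifting the standard generators of $\Ch(R)$ (the maps $S^{n-1}\hookrightarrow D^n$ and $0\to D^n$) through the free-forgetful adjunction between $\aCatSt$ and the category of $R$-quivers (graded-hom-space data), together with the standard maps adjoining an object and an interval-type $\Ain$-category that invert a chosen degree-zero morphism up to homotopy (the $\Ain$-analogue of the DG interval used in Tabuada's structure, Example \ref{Tabboz}). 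The candidate weak equivalences are declared to be the quasi-equivalences, and the candidate fibrations the isofibrations of Definition \ref{isofib} that are surjective on morphisms. Note that because $\aCatSt$ fixes the morphisms to be \emph{strict}, the category is better behaved than $\aCat$: strict $\Ain$-functors are determined by genuine chain maps on hom-complexes preserving the $m^n$ strictly, so the forgetful functor to quivers is monadic and $\aCatSt$ is complete and cocomplete, which is exactly what repairs the failure of equalizers in $\aCat$ noted above.

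Second, I would verify the hypotheses of Kan's recognition theorem (the version for cofibrantly generated model categories, e.g.\ Hirschhorn Thm.\ 11.3.1): the weak equivalences satisfy two-out-of-three and are closed under retracts (immediate from the definition via induced functors on $H(\A)$ and $H^0$); the domains of $I$ and $J$ are small (using that hom-complexes are honest $R$-modules and the relevant colimits are filtered, so smallness holds at some regular cardinal); and then the two key matching conditions, namely that $J\text{-cell}\subseteq W\cap I\text{-cof}$ and that $I\text{-inj}=W\cap J\text{-inj}$. For the fibrations I would show $J\text{-inj}$ coincides with the isofibrations surjective on morphisms by a direct lifting computation against each generator in $J$, and $I\text{-inj}$ with the trivial fibrations.

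The main obstacle I expect is the acyclicity of relative $J$-cell complexes, i.e.\ showing every pushout of a generating trivial cofibration is a quasi-equivalence and that these are preserved under transfinite composition. The subtlety is that pushouts in $\aCatSt$ along object-adjunction and interval-adjunction maps must be computed explicitly, and one must check that adjoining a contractible-interval object or a free cell does not change $H(\A)$ and $H^0(\A)$ up to equivalence; here the strictness of the morphisms is essential because it makes these pushouts computable on hom-complexes, but one still has to control the $A_\infty$ higher products $m^n$ generated freely in the pushout. I would handle this by a filtration/perturbation argument on the hom-complexes of the pushout, exhibiting an explicit contracting homotopy that kills the freely adjoined generators, thereby showing the inclusion $\A\to\A\amalg_{\text{gen}}(\text{interval})$ induces isomorphisms on $H$ and an equivalence on $H^0$.

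Finally, once the model structure is in place, the last two assertions follow comparatively easily. Every object is fibrant because the terminal map $\A\to \ast$ is trivially an isofibration surjective on morphisms (there are no lifting constraints against a terminal object), so the fibration characterization gives fibrancy for all $\A$. For the cofibrant objects, I would argue that a cofibrant $\A$ is a retract of an $I$-cell complex; since the generators of $I$ are built from the cofibrations $S^{n-1}\hookrightarrow D^n$ of $\Ch(R)$, the hom-complexes of an $I$-cell complex are obtained from $0$ by pushouts and transfinite composites of these generating cofibrations, hence are cofibrant (i.e.\ degreewise-projective bounded-below, equivalently h-projective) in $\Ch(R)$; cofibrancy of hom-complexes is closed under retracts, yielding that each $\A(a_1,a_2)$ is cofibrant and in particular h-projective, matching the semi-free resolutions of \cite[Theorem 5.1]{Orn2} and answering the open question stated above.
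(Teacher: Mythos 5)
Your proposal follows essentially the same route as the paper: it verifies the Recognition Theorem for cofibrantly generated model structures on the complete and cocomplete category $\aCatSt$, using Tabuada-style generating cofibrations (adjoining an object, sphere-to-disk) and generating trivial cofibrations built from $0\to\mathbb{D}^n$ together with an $\Ain$-analogue of the Kontsevich interval, proves the key acyclicity of pushouts along generating trivial cofibrations via a contracting homotopy on the freely adjoined hom-complexes, characterizes fibrations by direct lifting computations, and obtains fibrancy from the terminal object and cofibrancy of hom-complexes by To\"{e}n's argument. The free--forgetful framing you use to produce the generators, and minor imprecisions (e.g.\ describing cofibrant objects of $\mbox{Ch}(R)$ as bounded-below degreewise projective), are cosmetic differences only, so the two approaches coincide.
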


Here $\aCatSt$ denotes the category of $\Ain$-categories with the strict $\Ain$-functors.\ Despite $\aCat$ is not complete, $\aCatSt$ is complete and cocomplete (see \cite[Theorem 4.5]{Orn2}).\ An $\Ain$-functor $\mathsf{F}=\mathcal{f}\mathsf{F}^n \mathcal{g}_{n=0,1}$ is called \emph{strict} $\Ain$-functor if $\mathsf{F}^0:\mbox{Ob}(\A)\to\mbox{Ob}(\B)$ and $\mathsf{F}^1$ preserves the underlying graded quivers, \emph{id est}:
\begin{align*}
\mathsf{F}^1(m^n_{\A}(a_1,...,a_n))=m^n_{\B}(\mathsf{F}^1(a_1),... ,\mathsf{F}^1(a_n))
\end{align*} 
for every $n\ge 1$ and $a_1,... ,a_n\in\A$.\ As a consequence of Theorem A, we have that the semi-free $\Ain$-categories are cofibrant (cf. \cite[Lemma D]{Orn2}).\\

We conclude by saying that, given an $\Ain$-category $\A$, we can take the cofibrant resolution $\A^{\tiny\mbox{cof}}$ (with respect to the model structure of Theorem A) which is cofibrant in the sense of Ganatra, Pardon, Shende \emph{et al}.

\subsection{State of art}

By the work of Lefevré-Hasegawa \cite{LH} the category $\mbox{Alg}_{\infty}$ of (non unital) $\Ain$-algebras linear over a field, has a model structure whose weak equivalences are the quasi-isomorphisms.\ In this model structure a morphism $f$ is a fibration (resp. cofibration) if $f^1$ is surjective (resp. injective).\

Note that $\mbox{Alg}_{\infty}$ has no equalizers (see \cite{COS1}) and coproduct.\ 
To see that $\mbox{Alg}_{\infty}$ has no coproduct we use the fact that (the bar-cobar functor) $\mbox{U}:\mbox{Alg}_{\infty}\to\mbox{DG-Alg}$ is right adjoint to the inclusion (see \cite{COS2}).\ Since the tensor product is the coproduct of two DG-algebras, we must have $\mbox{U}(A)\otimes \mbox{U}(B)\simeq\mbox{U}(A\otimes B)$, for any $\Ain$-algebras.\ Using this fact, it is easy to see that $\mbox{Alg}_{\infty}$ has no coproduct (it has to do with the fact that there is no a good notion of tensor product of $\Ain$-algebras \cite{Orn2}).\ This is no longer true if we consider the category of $\Ain$-categories, namely, it has coproducts (which is the disjoint union) but it does not have equalizers \cite[Lemma 1.28]{COS1}.\

On the other hand, if we consider the category of $\Ain$-algebras with strict $\Ain$-morphisms we have a model structure by \cite[2.2.1. Theorem]{Hin}.\ In this model structure the fibrations are the morphisms $f$ such that $f^1$ is surjective.\

Moreover, in \cite{CO} we proved that the category of $\Ain$-categories, linear over a field, is a fibrant category.\ 
The fibrations are the $\Ain$-functors $\F$ which are isofibrations and such that $\F^1$ is degree-wise surjective.\

We conclude by saying that we do not known if $\mbox{Alg}_{\infty}$ and $\aCat$ have coequalizers.\ 
This is an interesting question since, if it was true (at least in some cases i.e. along the cofibrations), one could try to prove that the category $\aCat$ (linear over a commutative ring) has a structure of cofibrant category.

\section{Model Structures and Recognition Theorem}

We give two examples of model structures cofibrantly generated.\  
It will be crucial the following result which goes under the name of \emph{Recognition Theorem} and corresponds to \cite[Theorem 2.1.19]{Hov}:
\begin{thm}[Recognition Theorem]\label{coff}
Let $\C$ be a complete and cocomplete category with $\mathscr{W}$ a subcategory of $\C$, and $I$ and $J$ sets of maps of $\C$.\ There is a cofibrantly generated model structure on $\C$ with $I$ as the set of generating
cofibrations, $J$ as the set of generating trivial cofibrations, and $\mathscr{W}$ as the subcategory
of weak equivalences, if and only if the following conditions are satisfied:
\begin{itemize}
\item[1.] The subcategory $\mathscr{W}$ has the two out of three property and is closed under
retracts.
\item[2.] The domains of $I$ are small relative to $I$-cell.
\item[3.] The domains of $J$ are small relative to $J$-cell.
\item[4.] $J$-cell $\subset$ $\mathscr{W}\cap I\mbox{-cof}$.
\item[5.] $I$-inj $\subset$ $\mathscr{W}\cap J\mbox{-inj}$.
\item[6.] Either $\mathscr{W}\cap I\mbox{-cof}\subset J\mbox{-cof}$ or $\mathscr{W}\cap J\mbox{-inj}\subset I\mbox{-inj}$.
\end{itemize}
\end{thm}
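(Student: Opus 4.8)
The plan is to define the three distinguished classes by setting the weak equivalences to be $\mathscr{W}$, the fibrations to be $J\mbox{-inj}$, and the cofibrations to be $I\mbox{-cof}$, and then to verify Quillen's axioms, extracting the factorizations from the small object argument and the lifting properties from a retract argument. First I would dispose of the formal axioms: the existence of all small limits and colimits is the completeness and cocompleteness hypothesis on $\C$; the two-out-of-three property and the retract-closure of $\mathscr{W}$ are condition~1; and $I\mbox{-cof}$ and $J\mbox{-inj}$ are automatically closed under retracts, being defined by a left, respectively right, lifting property.

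Next I would produce the two factorizations by the small object argument, for which conditions~2 and~3 (smallness of the domains of $I$ and of $J$) are exactly the needed input. Applying it to $I$ factors an arbitrary map as a member of $I\mbox{-cell}\subset I\mbox{-cof}$ followed by a member of $I\mbox{-inj}$; condition~5, namely $I\mbox{-inj}\subset\mathscr{W}\cap J\mbox{-inj}$, shows the right-hand factor is simultaneously a fibration and a weak equivalence, so this is a factorization into a cofibration followed by a trivial fibration. Applying it to $J$ factors an arbitrary map as a member of $J\mbox{-cell}\subset J\mbox{-cof}$ followed by a member of $J\mbox{-inj}$; from condition~4, $J\mbox{-cell}\subset\mathscr{W}\cap I\mbox{-cof}$, together with the retract-closure of $\mathscr{W}$ and of $I\mbox{-cof}$, one deduces $J\mbox{-cof}\subset\mathscr{W}\cap I\mbox{-cof}$, so the left-hand factor is a trivial cofibration and this is a factorization into a trivial cofibration followed by a fibration.

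The hard part will be the two lifting axioms, and the strategy is to reduce them to the identifications that the trivial fibrations are exactly $I\mbox{-inj}$ and the trivial cofibrations are exactly $J\mbox{-cof}$; granting these, both liftings are immediate from the very definitions of $I\mbox{-cof}$ and $J\mbox{-inj}$. One inclusion of each identification is already in hand, since conditions~5 and~4 give $I\mbox{-inj}\subset\mathscr{W}\cap J\mbox{-inj}$ and $J\mbox{-cof}\subset\mathscr{W}\cap I\mbox{-cof}$. The reverse inclusions are where the alternative condition~6 enters, and the delicacy is that only \emph{one} of its two clauses may be assumed. If the clause $\mathscr{W}\cap I\mbox{-cof}\subset J\mbox{-cof}$ holds, then the trivial cofibrations coincide with $J\mbox{-cof}$ at once, and I would recover trivial fibrations $=I\mbox{-inj}$ by a retract argument: given a trivial fibration $p$, factor $p=q\circ i$ with $i\in I\mbox{-cof}$ and $q\in I\mbox{-inj}$; two-out-of-three forces $i$ to be a weak equivalence, hence a trivial cofibration, hence a member of $J\mbox{-cof}$, so $i$ lifts against the fibration $p$, exhibiting $p$ as a retract of $q\in I\mbox{-inj}$ and therefore placing $p$ in $I\mbox{-inj}$. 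If instead the clause $\mathscr{W}\cap J\mbox{-inj}\subset I\mbox{-inj}$ holds, the dual factorization and the dual retract argument yield trivial cofibrations $=J\mbox{-cof}$. Either way both identifications hold, the two lifting axioms follow, and the resulting structure is cofibrantly generated with $I$ and $J$ as the generating cofibrations and generating trivial cofibrations; running this retract argument correctly under whichever half of condition~6 is available is the crux of the whole proof.
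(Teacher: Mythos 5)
Your proposal is correct and is essentially the canonical argument: the paper itself does not prove this statement but quotes it as \cite[Theorem 2.1.19]{Hov}, and your construction (weak equivalences $\mathscr{W}$, fibrations $J\mbox{-inj}$, cofibrations $I\mbox{-cof}$, factorizations via the small object argument from conditions 2 and 3, and the lifting axioms reduced to the identifications trivial fibrations $=I\mbox{-inj}$ and trivial cofibrations $=J\mbox{-cof}$ using conditions 4, 5 and the available half of 6 through the retract argument) is precisely Hovey's proof. The one omission is the ``only if'' half of the biconditional, which is routine: if such a cofibrantly generated model structure exists, then condition 1 is a model category axiom, conditions 2 and 3 are part of the definition of cofibrant generation, and conditions 4--6 follow from the equalities $I\mbox{-inj}=\mathscr{W}\cap J\mbox{-inj}$ and $J\mbox{-cof}=\mathscr{W}\cap I\mbox{-cof}$ that hold in any such structure.
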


We recall that $f\in I\mbox{-inj}$ if $f$ has the right lifting property with respect to any morphism in $I$.\
We say that $f\in I\mbox{-cof}$ if $f$ has the left lifting property for any $I$-injective morphism.

\begin{exmp}\label{unbCh}
We denote by $\mbox{Ch}(R)$ the category of unbounded chain complexes over a commutative ring $R$.\ 
The category $\mbox{Ch}(R)$ has a model structure whose weak-equivalences are the quasi-isomorphisms (see \cite[Definition 2.3.3 and Theorem 2.3.11]{Hov}).\
The set $I$ is given by the maps $i_n:\mathbb{S}^n\to\mathbb{D}^n$, for each $n\in\mathbb{Z}$, where $\mathbb{S}^{n}$ and $\mathbb{D}^{n}$ are the chain complexes so defined:
\begin{align*}
\mathbb{S}^{n}:=(R[n],0)
\end{align*}
and 
\begin{align*}
\mathbb{D}^{n}=\big(R[n+1]\oplus R[n],d_{\tiny\mbox{cone}(\Id_R)}:=\begin{pmatrix}
-d & 0 \\
\Id_R & d
\end{pmatrix}
\big).
\end{align*}
The set $J$ consists of $j_n:0\to \mathbb{D}^n$, for each $n\in\mathbb{Z}$.\ In this model structure the fibrations are the maps $f$ such that $f_n$ is surjective for all $n\in\mathbb{Z}$ and the cofibrant objects are the h-projective degreewise projective modules (see \cite[Theorem 9.6.1 (ii$’$) iff (v$’$)]{AFH}).
\end{exmp}

Before continuing, we denote by $K$ the Kontsevich category, which is the DG-category with two objects generated by the following morphisms:
\begin{align*}
\xymatrix{
1 \ar@(l,u)[]^{r_1} \ar[r]^f\ar@/^2pc/[r]^{r_{12}}&\ar@/^0.5pc/[l]^g2\ar@(r,u)[]_{r_2}
}
\end{align*}
with the relations:
\begin{itemize}
\item[1.] $d(r_{12})=r_2\cdot f + f\cdot r_1$.
\item[2.] $d(r_1)=g\cdot f-\Id_1$.
\item[3.] $d(r_2)=f\cdot g-\Id_2$.
\end{itemize}
Note that $K$ is a semi-free resolution of the DG-category $I$ (see \cite[3.7.6. Remark]{Dri}).\ 
$I$ is the DG-category with two objects and two closed morphisms of degree zero 
\begin{align}\label{int}
\xymatrix{
1\ar@/^/[r]^{j_{01}}&\ar@/^/[l]^{j_{10}}2
}
\end{align}
such that $j_{01}\cdot j_{10}=\Id_2$ and $j_{10}\cdot j_{01}=\Id_1$.\\
To prove that $K$ is semi-free, we consider the filtration:
\begin{align*} 
\xymatrix{
0\ar@{^(->}[r]&I_0 \ar@{^(->}[r]&I_1 \ar@{^(->}[r]&I_2 \ar@{^(->}[r]&I_3:=K.
}
\end{align*}
Here
\begin{itemize}
\item[1.] $I_0$ is the discrete category with two objects: 1 and 2.
\item[2.] $I_1$ is the DG-category freely generated by two closed generators $f$ and $g$
\item[3.] $I_2$ is the DG-category freely generated on $I_1$ by the generators $r_1$ and $r_2$, of degree 1, such that $d(r_1)=g\ast f -\Id_1$ and $d(r_2)=f\ast g -\Id_2$.
\item[4.] $I_3$ is the DG-category freely generated on $I_2$ adding the generator $r_{12}$ and taking the quotient by the DG-ideal $(d(r_{12})-r_2\ast f + f \ast r_1)$. 
\end{itemize}
We have the DG functors $\Psi_0:I_0\to I$, $\Psi_1:I_1\to I$ and $\Psi_2:I_2\twoheadrightarrow I$, $\Psi:K\twoheadrightarrow I$.\ 
In particular, $\Psi_2$ and $\Psi$ are surjective on the morphisms and $\Psi$ is a quasi-equivalence.\ In particular $\Psi(j_{01})=f$, $\Psi(j_{10})=g$ and $\Psi(r_1)=\Psi(r_2)=\Psi(r_{12})=0$.\

\begin{rem}
Note that we added $r_{12}$ in $I_3$, since 
\begin{align}\label{cicl}
r_2\ast f - f\ast r_1
\end{align} 
is a closed morphism which vanish in $I$ via $\Psi$.\ It means that, if we want to make $\Psi$ a quasi-equivalence then (\ref{cicl}) must be a coboundary in cohomology.\\
On the other hand, also 
\begin{align}\label{cicl2}
r_1\ast g - g \ast r_2 
\end{align} 
is a closed morphism whose image via $\Psi_2$ is 0.\ 
Nevertheless we do not need to add a new generator in $I_3$, since (\ref{cicl}) is the differential of the following morphism:
\begin{align*}
(g\ast r_{12}\ast g + r_1\ast g\ast r_2 - g\ast r_2\ast r_2 + r_1\ast r_1\ast g).
\end{align*}
\end{rem}

\begin{exmp}\label{Tabboz}
In \cite{Tab}, Tabuada proved that the category of DG-categories has a cofibrantly generated model structure whose weak-equivalences are the quasi-equivalences.\
The set of generating cofibrations $I$ consists of the DG-functors $Q$ and $S(n)$ described as follows.\ 
$Q$ is the (only) DG-functor 
\begin{align}
Q:\emptyset\to \A
\end{align}
from the empty set (which is the initial category of DG-cats and $\aCat$) to $\A$ which is the DG-category:
\[
\xymatrix{
3\ar@(r,u)[]_{\tiny\Id_3}
}
\]
with only one object, denoted by 3, and the identity.\
Fixed an integer $n$, $S(n)$ is the DG-functor
\begin{align*}
S(n):C(n)&\to P(n)\\
\mathbb{S}^{n-1}&\mapsto \mathbb{D}^{n-1}.
\end{align*}
Here $C(n)$ and $P(n)$ are the two DG-categories having two objects $8$, $9$ and $6$, $7$ and whose hom-spaces are given by  
\begin{align}
C(n)(8,9):=\mathbb{S}^{n-1}
\end{align}
and 
\begin{align}
P(n)(6,7):=\mathbb{D}^{n}.
\end{align}
The trivial cofibrations are generated by $J:=\mathcal{f}\mbox{$F$, $R(n)$}\mathcal{g}$.\ 
Fixed an integer $n$, the DG-functor $R(n):\B\to P(n)$ is defined as follows: 
\begin{align}
\xymatrix{
4\ar[d]&5 \ar[d]\\
6\ar[r]_{\mathbb{D}^{n}}&7
}
\end{align}
Here $\B$ is the category which has two objects 4 and 5 and no non trivial morphisms.\\
The functor $F:\A\to K$ is the DG-functor sending the object 3 of $\A$ to the object $1$ of $K$.\\
Note that, in this model structure the fibrations are the isofibrations (see Definition \ref{isofib}) which are degreewise surjective \cite[Proposition 1.13]{Tab2}.\
Moreover, every object is fibrant and the cofibrant objects are such that the hom-spaces are cofibrant in Ch($R$), see \cite[Proposition 2.3 (3)]{Toe}.
\end{exmp}

\section{Proof of Theorem A}

In this section we prove the main result using the Recognition Theorem \ref{coff}.\\
 
We define the category $K^{\tiny\Ain}$ to be the $\Ain$ semi-free resolution (according to \cite[\S5]{Orn2}) of the category $I$ (see (\ref{int})) defined as follows:
\begin{align*}
\xymatrix{
0\ar@{^(->}[r]& I_0 \ar@{^(->}[r]& I_1 \ar@{^(->}[r]& I_2 =:K^{\tiny\Ain}.
}
\end{align*}
\begin{itemize}
\item[0.] $I_0$ is the discrete (strictly unital) category with two objects: $1$ and $2$.
\item[1.] $I_1$ is the $\Ain$-category freely generated by the closed morphisms $j_{12}$ and $j_{21}$.
\item[2.] $I_2$ is the $\Ain$-category freely generated on $I_1$ adding the generators $r_1$ and $r_2$, such that 
\begin{itemize}
\item[i)] $m^1(r_1)= (\MDu; f,g) -1$. 
\item[ii)] $m^1(r_2)= (\MDu; g,f) -1$. 
\end{itemize}
\end{itemize}
Note that:
\begin{align*}
m^1_{K^{\tiny\Ain}}\big( (\MDu; r_1, f) - (\MDu; f, r_2 ) \big) &= \big( (\MDu; m^1(r_1), f) - (\MDu; f,m^1(r_2)) \big)\\
&= (\MCS ; f,g,f) - (\MCD; f,g,f )\\
&\not=0. 
\end{align*}
As we already noted, this is not the same in the case of DG-categories, since 
\begin{align*}
d_{K}( r_1\ast f - f \ast r_2 ) &= \big( d(r_1)\ast f) - (f \ast d(r_2)) \big)\\
&= (f\ast g\ast f) - (f\ast g \ast f )\\
&=0. 
\end{align*}
This is why we need to add the free generator $r_{12}$ in $K$ which is the coboundary of the cocycle $r_1\ast f - f \ast r_2$.\ 
Nevertheless $K^{\tiny\Ain}$ and $K$ are weakly equivalent since both are h-projective with splits unit and quasi-equivalent to $I$ (see \cite[Theorem 5.2]{Orn2} and \cite[Definition 3.4]{COS2}).\
It is important to say that, since $K^{\tiny\Ain}$ is h-projective, $I(n,m)=(R,0)$ is a free $R$-module (for any $n,m=\mathcal{f} 1,2 \mathcal{g}$) and they are quasi-equivalent, then $(K^{\tiny\Ain},m^1_{K^{\tiny\Ain}})$ is homotopy equivalent to the complex $(R,0)$.\
In particular, since $K^{\tiny\Ain}$ {has split unit}, it implies that the short exact sequence of graded complexes
\[
\xymatrix{
0\ar[r]&R\cdot 1_1\ar@{^(->}[r]&K^{\tiny\Ain}(1,1)\ar@{->>}[r]&{K}^{\tiny\Ain}(1,1)/R\cdot1_1\ar[r]&0.
}
\]
splits.\\

We define $F'$ to be the strict $\Ain$-functor from $\A$ to $K^{\tiny\Ain}$ such that $F'(3):=1$.

\begin{thm}\label{MS}
$\aCat_{\tiny\mbox{strict}}$ has a model structure cofibrantly generated by $J':=\mathcal{f} F',R(n)\mathcal{g}$ and $I$ (defined in Example \ref{Tabboz}).
\end{thm}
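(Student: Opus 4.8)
The plan is to apply the Recognition Theorem \ref{coff} to the category $\aCatSt$ with the proposed generating sets $I$ and $J' := \{F', R(n)\}$, taking $\mathscr{W}$ to be the class of quasi-equivalences. Since \cite[Theorem 4.5]{Orn2} guarantees that $\aCatSt$ is complete and cocomplete, the ambient hypotheses are met, and the work reduces to verifying conditions 1--6 of Theorem \ref{coff}. First I would dispatch condition 1: the two-out-of-three property for quasi-equivalences is immediate from the fact that $[\F]$ and $H^0(\F)$ are ordinary equivalences of graded categories (and these properties are stable under composition and retracts), so $\mathscr{W}$ is closed under retracts as well. Conditions 2 and 3 (smallness of the domains of $I$ and $J'$) should follow by the standard argument that the hom-complexes are built from finitely generated free $R$-modules in each degree, so every domain is a small object relative to the respective cell complexes; this is essentially the same smallness estimate used by Tabuada in Example \ref{Tabboz}.

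The heart of the proof lies in identifying the two classes $I\text{-inj}$ and $J'\text{-inj}$ explicitly, since conditions 4, 5, and 6 are all phrased in terms of these weak factorization systems. I would first characterize $I\text{-inj}$: by lifting against $Q$ and the maps $S(n)$, a strict $\Ain$-functor $\F$ lies in $I\text{-inj}$ precisely when $\F^1$ is surjective on morphisms in every degree and $\F$ is a surjective quasi-isomorphism on hom-complexes — in other words, $\F$ is a trivial fibration. The surjectivity condition comes from lifting $Q:\emptyset\to\A$, while lifting the $S(n)$ forces the hom-complexes to have the right lifting property against the generating cofibrations $i_n$ of $\mbox{Ch}(R)$ from Example \ref{unbCh}. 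This identification gives condition 5 ($I\text{-inj}\subset\mathscr{W}\cap J'\text{-inj}$) once I check that such surjective quasi-isomorphisms are quasi-equivalences and have the right lifting property against $F'$ and $R(n)$.

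Next I would analyze $J'\text{-inj}$. Lifting against $R(n):\B\to P(n)$ forces $\F^1$ to be degreewise surjective (this is the same mechanism Tabuada uses), while lifting against $F':\A\to K^{\tiny\Ain}$ is the genuinely $\Ain$-theoretic ingredient: because $K^{\tiny\Ain}$ is the semi-free resolution of the interval category $I$ and is quasi-equivalent to it, the right lifting property against $F'$ encodes exactly the isofibration condition (Definition \ref{isofib}) — any isomorphism in $H^0(\B)$ lifting an object through $\F$ can be lifted to an actual $\Ain$-equivalence, using the split unit of $K^{\tiny\Ain}$ established above to contract the relevant cocycle. This identifies $J'\text{-inj}$ with the class of fibrations of Theorem A, i.e. the surjective isofibrations. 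Condition 6 I would establish via the second alternative, $\mathscr{W}\cap J'\text{-inj}\subset I\text{-inj}$: a trivial fibration is a surjective isofibration that is also a quasi-equivalence, and one checks it is automatically a surjective quasi-isomorphism on hom-complexes, hence in $I\text{-inj}$.

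The main obstacle will be condition 4, namely $J'\text{-cell}\subset\mathscr{W}\cap I\text{-cof}$: I must show that transfinite compositions of pushouts of $F'$ and $R(n)$ are both quasi-equivalences and $I$-cofibrations. The inclusion into $I\text{-cof}$ is formal provided $J'\subset I\text{-cof}$, which reduces to checking that $F'$ and $R(n)$ each lift against every $\F\in I\text{-inj}$; the $R(n)$ case mirrors Tabuada, but $F'$ requires that trivial fibrations lift against $\A\to K^{\tiny\Ain}$, again leveraging that $K^{\tiny\Ain}$ is a semi-free, h-projective, split-unit resolution of $I$. The delicate part is proving that a pushout of $F'$ is a quasi-equivalence: since $F'$ adjoins a contractible interval-type object $K^{\tiny\Ain}$, attaching it along an object of an arbitrary $\Ain$-category must not change the homotopy category up to equivalence. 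I would argue this by showing that the pushout adds a new object isomorphic in $H^0$ to an existing one together with the contractible data $r_1, r_2$, so $[\F]$ and $H^0(\F)$ remain essentially surjective and fully faithful; the computation displayed before the theorem (that $m^1(r_1) = (\MDu; f,g) - 1$ forces $f,g$ to become mutually inverse in cohomology) is precisely what makes the adjoined object a genuine homotopy equivalence rather than merely a formal addition, and verifying this stability under transfinite composition is where the real care is needed.
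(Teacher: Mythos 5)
Your overall strategy coincides with the paper's: apply the Recognition Theorem \ref{coff} to $\aCatSt$, identify $I$-inj with the surjective functors of Definition \ref{surj} and $J'$-inj with the surjective isofibrations via the lifting analysis of Lemma \ref{RLP1}, deduce conditions 5 and 6 from $\mbox{Surj}=I\mbox{-inj}=\mathscr{W}\cap J'\mbox{-inj}$, and reduce condition 4 to $J'\subset I\mbox{-cof}$ together with the claim that pushouts of $F'$ and $R(n)$ are quasi-equivalences. Up to that point the proposal is sound and matches the paper.

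The gap is in the one step you yourself flag as delicate, and your argument for it would not work as stated. Claiming that the pushout of $F'$ ``adds a new object isomorphic in $H^0$ to an existing one together with the contractible data $r_1,r_2$, so $[\F]$ and $H^0(\F)$ remain essentially surjective and fully faithful'' assumes exactly what must be proved, namely fully faithfulness of $\mbox{inc}:\mathscr{M}\to\mathscr{P}$. A pushout of categories does not merely adjoin an object: it freely adjoins all words in the new generators, and these change the hom-complexes between the \emph{old} objects of $\mathscr{M}$. The paper handles this by computing $\mathscr{P}(x,y)$ as a direct sum over $m\ge 0$ of complexes
\begin{align*}
\mathscr{M}(z,y)\otimes \overline{K}^{\tiny\Ain}(z,z)\otimes\mathscr{M}(z,z)\otimes\cdots\otimes\overline{K}^{\tiny\Ain}(z,z)\otimes\mathscr{M}(x,z)
\end{align*}
with $m$ factors $\overline{K}^{\tiny\Ain}(z,z):=K^{\tiny\Ain}(1,1)/R\cdot 1_1$, and the crux is that this quotient complex is \emph{contractible}. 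That is precisely where the h-projectivity and the split-unit property of $K^{\tiny\Ain}$ (the splitting of the short exact sequence displayed before the theorem) enter, and contractibility rather than mere acyclicity is essential because over an arbitrary commutative ring $R$ tensoring preserves homotopy equivalences but not quasi-isomorphisms. The computation you invoke, $m^1(r_1)=(\MDu; f,g)-1$, only makes the adjoined object isomorphic to $z$ in $H^0$, i.e.\ it gives essential surjectivity; it says nothing about the summands with $m\ge 1$. (An analogous explicit computation, with $\mathbb{D}^n$ contractible in place of $\overline{K}^{\tiny\Ain}(z,z)$, is needed for pushouts of $R(n)$, which your proposal does not treat.) Finally, the transfinite-composition step you single out as where ``the real care is needed'' is in fact the routine part, since homology commutes with the relevant filtered colimits; the substance of condition 4 is the single-pushout computation above.
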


First we give two definitions and a Lemma.
\begin{defn}\label{isofib}
An $\Ain$-functor (resp. a DG-functor) $\F:\A\to\B$ is an \emph{isofibration} if, given $a\in\A$ and an isomorphism $g:\F(a)\to b$ in $H^0(\B)$, there exists an isomorphism $f:a\to a'$ in $H^0(\A)$, such that $\F^1(f)=g$.
\end{defn}

\begin{defn}\label{surj}
An $\Ain$-functor (resp. a DG-functor) $\F:\A\to\B$ is \emph{surjective} if, $\F^0$ is surjective (as a map of sets) and $\F^1$ is a surjective quasi-isomorphism of complexes.
\end{defn}

We denote by Surj the set of strict surjective $\Ain$-functors.\
The following Lemma which will be useful also to characterize the fibrations of the model structure provided by Theorem \ref{MS}.

\begin{lem}\label{RLP1}
A strict $\Ain$-functor $\mathsf{F}:\C\to\mathscr{D}$ has right lifting property with respect to
\begin{itemize}
\item[F1)] $R(n)$ if and only if $\mathsf{F}^1$ is surjective on the morphisms.
\item[F1$'$)] $S(n)$ if and only if $\mathsf{F}^1$ is surjective on the morphisms.
\item[F2)] $F$ if and only if $\mathsf{F}^1$ is an isofibration.
\item[F3)] $Q$ if and only if $\mathsf{F}^0$ is surjective (as a map of sets).
\end{itemize} 
\end{lem}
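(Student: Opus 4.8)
The plan is to prove each biconditional by the standard lifting-property argument, testing $\mathsf{F}$ against each generator. For each generator $g$, recall that $\mathsf{F}$ has the right lifting property (RLP) with respect to $g$ precisely when, for every commutative square with $g$ on the left and $\mathsf{F}$ on the right, a diagonal filler exists. Because the generators $R(n)$, $S(n)$, $F$ and $Q$ all have very small, explicitly generated domains and codomains, a lifting problem against $\mathsf{F}$ reduces to a piece of concrete data in $\mathscr{C}$ and $\mathscr{D}$, and producing a filler becomes a matter of choosing a preimage. I would handle the four items in the order F3), F1$'$), F1), F2), since F3) is purely set-theoretic and warms up the mechanism, while F2) involving isofibrations is the most delicate.

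For F3), the domain of $Q:\emptyset\to\A$ is the initial object, so a square over $\mathsf{F}$ is just a choice of object in $\mathscr{D}$ (the image of the object $3$), and a lift is exactly a preimage under $\mathsf{F}^0$; hence RLP against $Q$ is equivalent to surjectivity of $\mathsf{F}^0$. For F1$'$) and F1), I would treat $S(n):C(n)\to P(n)$ and $R(n):\mathscr{B}\to P(n)$ together, as both codomains are $P(n)$ with hom-space $\mathbb{D}^n$. A lifting square against $S(n)$ amounts to a morphism $a\in\mathscr{C}$ mapping to a prescribed element of $\mathbb{D}^n$ in $\mathscr{D}$ with matching source and target data; since $\mathbb{D}^n$ is freely generated by a single degree-$n$ generator whose differential hits the $\mathbb{S}^{n-1}$-class, solving the lift is equivalent to finding a preimage of that generator, i.e. to $\mathsf{F}^1$ being surjective on morphisms. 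The case $R(n)$ is analogous but with the two free objects $4,5$ supplying the source/target freely, so again surjectivity of $\mathsf{F}^1$ is exactly what is needed. In each direction I would verify that a filler of the appropriate degree can be chosen degreewise and is automatically compatible with the (strict) $\Ain$-structure because $\mathsf{F}$ is strict.

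The main obstacle, and the reason F2) is the crux, is the passage $F:\A\to K$. Here the codomain $K$ is the Kontsevich category, the semi-free resolution of the interval $I$, whose generators $f,g,r_1,r_2$ (and $r_{12}$) encode a \emph{chosen} homotopy inverse: a lifting square over $\mathsf{F}$ against $F$ supplies an object $a\in\mathscr{C}$ together with the data witnessing that $\mathsf{F}(a)$ is isomorphic in $H^0(\mathscr{D})$ to some object, and a filler is precisely a coherent isomorphism in $H^0(\mathscr{C})$ lifting it. So I would show that solving the lifting problem against $F$ is equivalent to lifting an $H^0$-isomorphism through $\mathsf{F}^1$, which is the definition of isofibration (Definition \ref{isofib}). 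The delicate point is that one must lift not merely a single isomorphism but the full semi-free packet of generators $f,g,r_1,r_2,r_{12}$; I expect to argue that once the degree-zero isomorphism is lifted, the higher homotopy generators can be lifted inductively up the semi-free filtration $I_0\hookrightarrow I_1\hookrightarrow I_2\hookrightarrow I_3=K$, using strictness of $\mathsf{F}$ and the fact that each successive stage adds a free generator whose differential is already determined in $\mathscr{C}$. The converse direction is easier: given RLP against $F$, feeding in an $H^0(\mathscr{D})$-isomorphism as the prescribed image produces, via the filler, the required $H^0(\mathscr{C})$-isomorphism, exhibiting $\mathsf{F}^1$ as an isofibration.
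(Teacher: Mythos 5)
Your treatments of F3), F1) and F1$'$) coincide with the paper's proof: one tests directly against each generator, using that a strict $\Ain$-functor out of $\emptyset$, out of $\B$, or out of $C(n)$, resp.\ into $P(n)$, is freely determined by the image of the object, resp.\ of the top generator of $\mathbb{D}^n$; the paper's converse for F1) is exactly your "choose a preimage" argument, implemented via the test functor $(a,b)\mapsto a\cdot g+b\cdot dg$.

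The gap is in F2), which you correctly single out as the crux but resolve against the wrong test object. The functor in question is $F'\colon\A\to K^{\tiny\Ain}$, where $K^{\tiny\Ain}$ is the \emph{$\Ain$ semi-free} resolution of $I$, generated only by $j_{12},j_{21},r_1,r_2$ --- there is no generator $r_{12}$ --- whereas you work with the DG Kontsevich category $K$ and propose to lift the full packet $f,g,r_1,r_2,r_{12}$ inductively along $I_0\hookrightarrow I_1\hookrightarrow I_2\hookrightarrow I_3=K$. The difference is not cosmetic. First, in $\aCatSt$ the bottom map of a lifting square must be a \emph{strict} $\Ain$-functor $T\colon K\to\mathscr{D}$; since $m^n_K=0$ for $n\ge 3$, strictness forces $m^n_{\mathscr{D}}(T(a_1),\dots,T(a_n))=0$ on all composable tuples of image morphisms. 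For a general $\Ain$-category $\mathscr{D}$ this cannot be arranged, so from an arbitrary isomorphism in $H^0(\mathscr{D})$ one cannot manufacture any test functor out of $K$; the direction ``RLP $\Rightarrow$ isofibration'' then has nothing to feed on, and the biconditional is simply false with $K$ in place of $K^{\tiny\Ain}$. By contrast, in $K^{\tiny\Ain}$ the higher products of the generators are themselves free elements, so a strict functor out of $K^{\tiny\Ain}$ is exactly a choice of images of $j_{12},j_{21},r_1,r_2$ subject only to the two relations on $m^1(r_1)$, $m^1(r_2)$ --- i.e.\ precisely the data $(g,g^{-1},h,t)$ witnessing an $H^0(\mathscr{D})$-isomorphism. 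The paper's proof is this direct one-step construction; no induction over a filtration is needed. Second, even in the purely DG setting your induction would stall at the last stage: to define $T(r_{12})$ one needs $T(r_2)\ast T(f)+T(f)\ast T(r_1)$ to be exact in $\mathscr{D}$, which fails for arbitrary choices of the homotopies $h,t$ and requires correcting them. The paper's construction of $K^{\tiny\Ain}$ exists precisely to make this issue disappear: in the free $\Ain$-category the element $m^2(r_1,f)-m^2(f,r_2)$ is not even closed, so no analogue of $r_{12}$ is required, and the coherence step your sketch defers to ``lifting the higher homotopy generators inductively'' is not solved but rather shown to be vacuous.
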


\begin{proof}
Suppose that $\mathsf{F}$ is surjective on the morphisms, consider the diagram below:
\[
\xymatrix{
\B\ar[d]_{R(n)}\ar[r]&\C\ar[d]^{\mathsf{F}}\\
P(n)\ar[r]_{T}\ar@{-->}[ur]^{\tilde{T}}& \mathscr{D}
}
\]
It is easy to find a $\tilde{T}$ since $P(n)$ is surjective on the morphisms and $P(n)$ is uniquely determined by the image of $T(R[1]\oplus R)$.\\
Suppose now that $\mathsf{F}$ has the right lifting property with respect to $R(n)$.\
For every morphism $g\in \mathscr{D}$ we take the functor 
\begin{align*}
T:R[1]\oplus R &\to \mathscr{D}\\
(a,b)&\mapsto a\cdot g + b\cdot dg .
\end{align*}
If there exists $\tilde{T}:\mathscr{P}(n)\to \mathscr{C}$ such that 
\begin{align*}
\mathsf{F}\big(\tilde{T}((a,b))\big)=T\big( (a,b)\big)=a\cdot g + b\cdot dg .
\end{align*}
Taking $\tilde{T}(1,0)\in\C$, we have $\mathsf{F}(\tilde{T}(1,0))=T((1,0))=g$, so $\mathscr{F}$ is surjective on morphisms.\\
\\
The proof of F1$'$) is the same of F1).\\
\\
To prove F2), note that every strict $\Ain$-functor $K^{\tiny\Ain}\to\mathscr{D}$ is (uniquely) determined by the 
image of the generators.\ Suppose that $\mathsf{F}$ has right lifting property with respect to $F$, so for every commutative diagram of the form:
\[
\xymatrix{
R\ar[d]_{F}\ar[r]&\C\ar[d]^{\mathsf{F}}\\
K^{\tiny\Ain}\ar[r]_{T}\ar@{-->}[ur]^{\tilde{T}}& \mathscr{D}
}
\]
there exists a lift $\tilde{T}$.\\
Suppose there exists $g:F(c)\to d \in H^0(\mathscr{D})$.\ It means that there exist $g^{-1}$, $h$, $t$ such that:
\begin{align*}
m_{\mathscr{D}}^2(g,g^{-1})=\Id+m_{\mathscr{D}}^1(h)
\end{align*}
and 
\begin{align*}
m_{\mathscr{D}}^2(g^{-1},g)=\Id+m_{\mathscr{D}}^1(t).
\end{align*}
Then we can take the functor $T:K^{\tiny\Ain}\to \mathscr{D}$ so defined:
\begin{align*}
T(j_{12})=g\mbox{ , } T(j_{21})=g^{-1}\mbox{ , }T(r_1)=t\mbox{ and } T(r_2)=h.
\end{align*}
Since $\mathsf{F}$ has a right lifting property, there exists $\tilde{T}:K^{\tiny\Ain}\to\C$.\ 
It implies that there exists an isomorphism $\tilde{T}(j_{12})=:f$ and an object $\tilde{T}(2)=:c'$ in $H^0(\C)$ 
such that $\mathsf{F}(f)=T(j_{12})=g$, and we are done.\ The proof of viceversa is similar.\\
\\
Now we prove F3).\ We consider the commutative diagram:
\[
\xymatrix{
\emptyset \ar[d]_{Q}\ar[r]&\C\ar[d]^{\mathsf{F}}\\
\A\ar[r]_{T}\ar@{-->}[ur]^{\tilde{T}}& \mathscr{D}
}
\]
since every functor from $\A$ is uniquely determined by the image of 3, it is easy to see that if there exists a lifting $\tilde{T}$ for every $T$ then $\mathsf{F}$ must be surjective on the objects.\ On the other hand, if $\mathsf{F}^0$ is surjective then we can find a $\tilde{T}$ for every $T$.
\end{proof}

\begin{proof}[Proof of Theorem \ref{MS}]
We want to use Theorem \ref{coff}.\ It is easy to verify item 1.\ 2.\ and 3.\\
\\
To prove item 5.\ and 6.\ we claim that Surj $=$ $I$-inj $=$ $J$-inj $\cap\mathscr{W}$.\ 
To prove of Surj $=$ $I$-inj we can use Lemma \ref{RLP1} item F1$'$) and F3).\ Note that it is the same of \cite[Lemme 1.11]{Tab2}, since the set of generating cofibrations $I$ is the same of Example \ref{Tabboz}.\ 
Now we want to prove $J$-inj $\cap\mathscr{W}$ $=$ Surj.\ 
If $f\in J\mbox{-inj}\cap\mathscr{W}$ then $f$ has right lifting property with respect to $R(n)$ so, by Lemma \ref{RLP1} item F1), it is surjective on the morphisms.\ For the item F3) of the same Lemma it is an isofibration.\ Since it is a quasi-equivalence it is surjective on the objects.\\
On the other hand if $f\in\mbox{Surj}$, then $R(n)$ has the right lifting property with respect to $f$ (see item F1) of Lemma \ref{RLP1}).\ Moreover $F$ has the right lifting property with respect to $f$ since $K^{\tiny\Ain}$ is semi-free (see \cite[Lemma 6.6]{Orn2}) and we are done.\\
\\
To verify item 4. we note that $I$-inj $=$ $\mathscr{W}\cap$ $J$-inj $\subset$ $J$-inj.\ It implies that $J$-cof $\subset$ $I$-cof and $J$-cell $\subset$ $J$-cof $\subset$ $I$-cof.\ 
So it remains to prove that, for every A$_{\infty}$-category $\mathscr{M}$ and $\star\in J'$, the strict A$_{\infty}$-functor $\mbox{inc}$, fitting the pushout diagram
\begin{align}
\xymatrix{
 \ar[d]_{\star}\ar[r]&\mathscr{M}\ar[d]^{\tiny\mbox{inc}}\\
\ar[r]&\mathscr{P},
}
\end{align}
is a quasi-equivalence.\\
First we consider the push-out diagram:
\begin{align}
\xymatrix{
\A \ar[d]_{F}\ar[r]^N&\mathscr{M}\ar[d]^{\tiny\mbox{inc}}\\
K^{\tiny\Ain}\ar[r]^{N'}&\mathscr{P}
}
\end{align}
We denote $N(3)\in\mathscr{M}$ by $z$.\ 
The push-out $\mathscr{P}$ is obtained by taking the disjoint union of $\mathscr{M}$ and $K^{\tiny\Ain}$ and gluing the object $z$ of $\mathscr{M}$ with the object $1$ of $K^{\tiny\Ain}$.\ See the following picture:

\begin{figure}[htbp]
\centering
\includegraphics[width=2\textwidth, height=.17\textheight, keepaspectratio]{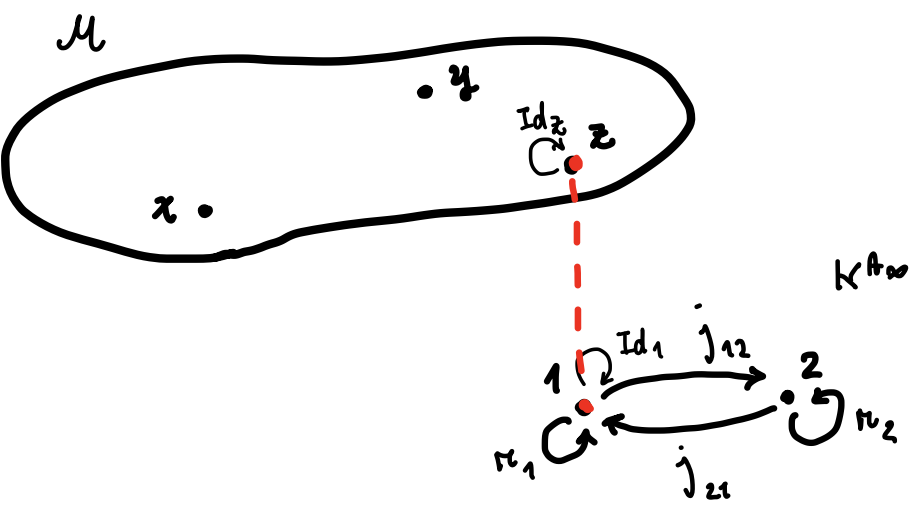}
\end{figure}
We have:
\begin{align}
\mathscr{P}(x,y):=\bigoplus_{m\ge0}\mathscr{P}^{(m)}(x,y),
\end{align}
where 
\begin{align*}
\mathscr{P}^{(m)}(x,y)= \underbrace{\mathscr{M}(z,y)\otimes \overline{K}^{\tiny\Ain}(z,z) \otimes \mathscr{M}(z,z)\otimes ...\otimes \overline{K}^{\tiny\Ain}(z,z)\otimes \mathscr{M}(x,z)}_\text{$m$ factors $\overline{K}^{\tiny\Ain}$}.
\end{align*}
Here $\overline{K}^{\tiny\Ain}(z,z)$ is the chain complex:
\begin{align}\label{quot}
\overline{K}^{\tiny\Ain}(z,z):={K}^{\tiny\Ain}(1,1)/R\cdot1_1.
\end{align}
Note that we take the quotient complex (\ref{quot}) because we "glue" the identity of the object $z\in \mathscr{M}$ with the identity of the object $1\in K^{\tiny\Ain}$.\
Note that the chain complex $(K^{\tiny\Ain}(1,1),m^1_{K^{\tiny\Ain}})$ is homotopy equivalent to $(R,0)$ and $\overline{K}^{\tiny\Ain}(z,z)$ is contractible.\ So
\begin{align*}
\mbox{inc}:\mathscr{M}(x,y) \to \mathscr{P}(x,y)
\end{align*}
is a quasi-isomorphism.\ It is also clear that $H^0(\mbox{inc})$ is essentially surjective since $N'(2)$ is quasi-isomorphic to $N'(1)=N(3)=z$.\\
On the other hand, we consider the push-out diagram:
\begin{align}
\xymatrix{
\B\ar[d]_{R(n)}\ar[r]^T&\mathscr{M}\ar[d]^{\tiny\mbox{inc}}\\
P(n)\ar[r]&\mathscr{P}
}
\end{align}
The category $\mathscr{P}$ is given by the disjoint union of $\mathscr{M}$ and $P(n)$, and by gluing the object $T(4)$ with the object $6$, and the object $T(5)$ with the object $7$.\ See the following picture:\\
\begin{figure}[htbp]
\centering
\includegraphics[width=2\textwidth, height=.17\textheight, keepaspectratio]{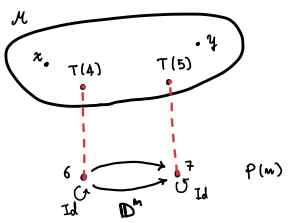}
\end{figure}
\\
We have:
\begin{align*}
\mathscr{P}(x,y)=\bigoplus_{m\ge0}\mathscr{P}^{(m)}(x,y),
\end{align*}
where 
\begin{align*}
\mathscr{P}^{(m)}(x,y)=\underbrace{\mathscr{M}(T(5),y)\otimes\mathbb{D}^n\otimes \mathscr{M}(T(5),T(4))\otimes ...\otimes \mathbb{D}^n\otimes \mathscr{M}(x,T(4))}_\text{$m$ factors $\mathbb{D}^n$},
\end{align*}
Since $\mathbb{D}^n$ is contractile then inc is a quasi-equivalence and we are done.
\end{proof}

\begin{cor}
The fibrations are the isofibrations $\mathsf{F}$ such that $\mathsf{F}^1$ are degreewise surjective.\
Every $\Ain$-category is fibrant.
\end{cor}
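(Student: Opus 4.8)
The plan is to obtain both assertions directly from the cofibrantly generated structure produced in Theorem \ref{MS} together with the lifting criteria of Lemma \ref{RLP1}. In any cofibrantly generated model category the fibrations are exactly the maps in $J'$-inj, that is, the strict $\Ain$-functors having the right lifting property against every generating trivial cofibration in $J'=\{F',R(n)\}$. So I would simply intersect the two relevant items of Lemma \ref{RLP1}: by F1) a strict $\Ain$-functor $\mathsf{F}$ lifts against all $R(n)$, $n\in\mathbb{Z}$, exactly when $\mathsf{F}^1$ is surjective in every degree, and by F2) it lifts against $F'$ exactly when $\mathsf{F}^1$ is an isofibration. Ranging over all $n$ turns ``surjective on morphisms'' into ``degreewise surjective'', so $J'$-inj is precisely the class of isofibrations $\mathsf{F}$ with $\mathsf{F}^1$ degreewise surjective, which is the first claim (and agrees with the characterization announced in Theorem A).

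For fibrancy I would first identify the terminal object $\mathbf{1}$ of $\aCatSt$ (recall that $\emptyset$ is the initial object). Since a strict $\Ain$-functor is determined by its action on objects and on the underlying graded quiver, the requirement that $\A\to\mathbf{1}$ be unique for every $\A$ forces $\mathbf{1}$ to have a single object $*$ and zero hom-complex $\mathbf{1}(*,*)=0$; strict unitality is not violated, because the unit $1_*$ is then simply $0$. By the first part it now suffices to verify that the unique functor $p\colon\A\to\mathbf{1}$ is an isofibration whose $p^1$ is degreewise surjective. The latter is immediate, as $p^1$ has target the zero complex and every map onto $0$ is surjective in each degree.

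The only point that I expect to need a careful word, and hence the main (very mild) obstacle, is the isofibration condition of Definition \ref{isofib}, which turns out to be essentially vacuous. In $H^0(\mathbf{1})$ the single hom-set is $H^0(0)=0$, so the only isomorphism with source $p(a)=*$ is the identity $\mathrm{id}_*$; I would lift it by the identity isomorphism $1_a\colon a\to a$ of $H^0(\A)$, which a strict functor sends to $p^1(1_a)=1_*=\mathrm{id}_*$. Hence $p$ is a fibration and every $\Ain$-category is fibrant. Once the terminal object and its degenerate unit are pinned down correctly, both statements are formal consequences of Lemma \ref{RLP1} and Theorem \ref{MS}.
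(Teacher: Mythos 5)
Your proof is correct, and for the first assertion it is exactly the paper's argument: by the Recognition Theorem the fibrations of the structure produced in Theorem \ref{MS} are precisely the maps in $J'$-inj, and intersecting items F1) and F2) of Lemma \ref{RLP1} (F1) applied to all $R(n)$, $n\in\mathbb{Z}$, and F2) to $F'$) identifies $J'$-inj with the isofibrations whose first component is degreewise surjective; this is what the paper's proof means by ``it follows directly from Lemma \ref{isofib}'' (a miscitation of Lemma \ref{RLP1}). Where you genuinely diverge from the paper is the fibrancy step, and there your version is the correct one. The paper asserts that the terminal object of $\aCatSt$ is the category $\A$ of Example \ref{Tabboz}, i.e.\ one object with hom-space $R\cdot\Id_3\cong R$; this is not terminal. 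A strict $\Ain$-functor $\C\to\A$ amounts to a unital, composition-compatible augmentation of each endomorphism algebra of $\C$, and such data is neither unique (for $\C$ the one-object category with endomorphism DG-algebra $R[x]$, $|x|=0$, $d=0$, both $x\mapsto 0$ and $x\mapsto\Id_3$ give strict functors) nor guaranteed to exist (for $\C$ with endomorphism algebra $M_2(R)$ there is no unital algebra map to $R$ when $R\neq 0$: writing $t=\phi(e_{11})$, commutativity of $R$ forces $\phi(e_{22})=t$, so $t^2=\phi(e_{11}e_{22})=0$ while $2t=1$, whence $1=4t^2=0$). The terminal object is, as you say, the one-object category $\mathbf{1}$ with $\mathbf{1}(*,*)=0$, which is still strictly unital with $1_*=0$, and your check that the unique functor $\C\to\mathbf{1}$ is (vacuously) an isofibration with degreewise surjective first component is precisely what fibrancy requires. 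So your write-up proves the Corollary by the paper's own strategy while repairing the identification of the terminal object used in the paper's proof.
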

\begin{proof}
It follows directly from Lemma \ref{isofib}, every $\Ain$-category is fibrant since the terminal object of $\aCatSt$ is the category with one object and one (trivial) morphism $\A$ (defined Example \ref{Tabboz}).
\end{proof}

\begin{thm}
If $\A$ is a cofibrant $\Ain$-category then $\A(x,y)$ is a cofibrant object in Ch($R$).\
\end{thm}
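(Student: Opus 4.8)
The plan is to reduce the claim to an analysis of $I$-cell complexes and then to an explicit computation of the hom-spaces produced by pushouts of the generating cofibrations. Since the model structure of Theorem \ref{MS} is cofibrantly generated with $I=\{Q,S(n)\}$ (the generating cofibrations of Example \ref{Tabboz}) as its generating cofibrations, the standard theory tells us that every cofibrant object $\A$ is a retract of an $I$-cell complex, i.e.\ of a transfinite composition of pushouts of $Q$ and $S(n)$ starting from the initial object $\emptyset$. If $\A$ is a retract of such a $\B$ via strict $\Ain$-functors $\A\xrightarrow{s}\B\xrightarrow{r}\A$ with $rs=\id_{\A}$, then for fixed objects $x,y$ the complex $\A(x,y)$ is, by strictness, a retract of $\B(s^{0}x,s^{0}y)$ in $\mbox{Ch}(R)$. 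As the cofibrant objects of $\mbox{Ch}(R)$ are closed under retracts, it therefore suffices to prove that every $I$-cell complex has hom-spaces that are cofibrant in $\mbox{Ch}(R)$.

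I would prove this last assertion by transfinite induction along the cell filtration; the base case $\emptyset$ is vacuous. For the successor step there are two pushouts to examine. A pushout of $Q:\emptyset\to\A$ merely adjoins a new object whose endomorphism complex is $(R,0)$ and which has vanishing hom-spaces to and from the rest, so all hom-spaces remain cofibrant. The essential case is a pushout of $S(n):C(n)\to P(n)$ along a strict $\Ain$-functor $T:C(n)\to\mathscr{M}$. Exactly as in the push-out computations carried out in the proof of Theorem \ref{MS}, freely adjoining the extra generator of $P(n)(6,7)=\mathbb{D}^{n}$ produces hom-spaces of the pushout $\mathscr{P}$ that decompose as a direct sum over the number of occurrences of the new generator, each summand being a tensor product over $R$ of hom-spaces of $\mathscr{M}$ with copies of the contractible complex $\mathbb{D}^{n}$. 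Because $\mbox{Ch}(R)$ is a monoidal model category, a tensor product of cofibrant complexes is cofibrant; since the hom-spaces of $\mathscr{M}$ are cofibrant by the inductive hypothesis and $\mathbb{D}^{n}$ is cofibrant, every summand is cofibrant, and an arbitrary direct sum of cofibrant complexes is cofibrant. For the limit step I use that each transition $\mbox{inc}$ induces on hom-spaces the inclusion of a direct summand with cofibrant complement, hence a cofibration of $\mbox{Ch}(R)$, and that the cofibrant objects are closed under transfinite composition of cofibrations.

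The step I expect to be the main obstacle is making the $S(n)$-pushout computation precise in the $\Ain$ setting. Unlike the DG case, the freely adjoined generator interacts with the ambient morphisms through all the higher products $m^{k}$, so one must verify carefully that the hom-spaces of $\mathscr{P}$ really split as the claimed direct sum of tensor products --- the analogue of the bar-type formula $\mathscr{P}(x,y)=\bigoplus_{m\ge0}\mathscr{P}^{(m)}(x,y)$ appearing in the proof of Theorem \ref{MS} --- and that the differential is compatible with this decomposition, so that the summandwise cofibrancy above genuinely yields cofibrancy of $\mathscr{P}(x,y)$. Once this bookkeeping is settled, the stability of cofibrant objects in $\mbox{Ch}(R)$ under tensor products, direct sums and transfinite compositions closes the argument.
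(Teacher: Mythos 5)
Your overall skeleton --- every cofibrant object is a retract of an $I$-cell complex, strictness makes $\A(x,y)$ a retract of a hom-complex of that cell complex, then transfinite induction over the cell attachments --- is exactly the strategy of the proof the paper delegates by citation (the paper's own proof is one line: it invokes \cite[Proposition 2.3 (3)]{Toe}, which is proved this way, noting that the generating cofibrations $I$ are those of Example \ref{Tabboz}). However, the key successor step of your induction is wrong as written, because you have conflated the pushout along the generating cofibration $S(n)$ with the pushout along the generating \emph{trivial} cofibration $R(n)$, which is the one actually computed in the proof of Theorem \ref{MS}. In an $R(n)$-pushout the source $\B$ is discrete, so the whole complex $\mathbb{D}^n$ (both generators) is adjoined freely; the differential then preserves the word-length decomposition, and the hom-complexes genuinely split as direct sums of tensor products with factors $\mathbb{D}^n$ --- that is the computation you transcribed. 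In an $S(n)$-pushout, by contrast, the sphere generator of $C(n)(8,9)=\mathbb{S}^{n-1}$ is identified with an existing cycle $z=T(\mathbb{S}^{n-1})$ of $\mathscr{M}$, so only the top generator $a$ of $\mathbb{D}^{n}$ is new, and its differential $m^1(a)=z$ lands in $\mathscr{M}$. Consequently the decomposition of $\mathscr{P}(x,y)$ by the number of occurrences of $a$ is a decomposition of \emph{graded modules} only: the differential strictly lowers the word length, the summands are not subcomplexes, and ``summand-wise cofibrancy'' is meaningless. The verification you yourself flag as ``the main obstacle'' does not just require care --- it fails. The same defect breaks your limit step: $\mathscr{M}(x,y)$ sits inside $\mathscr{P}(x,y)$ as a subcomplex that splits off as a graded module but \emph{not} as a complex, so the transition maps are not inclusions of direct summands. (A side remark: contractibility of $\mathbb{D}^n$ is irrelevant to cofibrancy; it is what makes the $R(n)$-pushouts \emph{weak equivalences} in the proof of Theorem \ref{MS}.)

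The standard repair is to replace the direct sum by the increasing word-length filtration $F_0\subset F_1\subset \cdots$ of $\mathscr{P}(x,y)$, which the differential does preserve since $m^1(a)$ has word length zero. Each inclusion $F_{p-1}\hookrightarrow F_p$ is a degreewise split injection whose cokernel carries the induced differential (the word-length-lowering terms die in the quotient) and is a direct sum of tensor products of hom-complexes of $\mathscr{M}$ with $p$ copies of the rank-one free module on $a$; by the inductive hypothesis and the monoidal model structure on $\mbox{Ch}(R)$ this cokernel is cofibrant, and a degreewise split injection with cofibrant cokernel is a cofibration in $\mbox{Ch}(R)$. Hence $\mathscr{M}(x,y)=F_0\to\mathscr{P}(x,y)=\mbox{colim}_p\, F_p$ is a transfinite composition of cofibrations with cofibrant source; this simultaneously gives the successor step and the statement about transition maps that your limit step actually needs. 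With this substitution your argument becomes a correct write-up of the proof the paper cites.
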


\begin{proof}
The proof is the same as in the case of DG-categories (see \cite[Proposition 2.3 (3)]{Toe}) since they have the same set of generating cofibrations $I$ of Example \ref{Tabboz}.
\end{proof}

As in the case of DG-categories not all the h-projective $\Ain$-categories are cofibrant object in this model structure.\ For example the category $C(1)\otimes C(1)$ is not a cofibrant object.\ It is not hard to prove that the map $\emptyset\to C(1)\otimes C(1)$ does not have the right lifting property with respect to the trivial fibrations (see \cite[Exercise 14 4.]{Toe2}).\


\end{document}